\documentclass[final]{elsarticle}

\usepackage{amsmath,amsthm}
\usepackage{amssymb,latexsym}

\newtheorem{thm}{Theorem}[section]

\newtheorem{lma}[thm]{Lemma}
\newtheorem{cor}[thm]{Corollary}
\newtheorem{prp}[thm]{Proposition}

\newtheorem{clm}[thm]{Claim}

\def\Dmon{\Delta_{{\rm mon}}}


\begin{document}

\begin{frontmatter}
\title{Existences of rainbow matchings and rainbow matching covers.}

\author{Allan Lo\fnref{thanks}}
\address{School of Mathematics, University of Birmingham, Birmingham, B15~2TT, United Kingdom}
\ead{s.a.lo@bham.ac.uk}
\fntext[thanks]{The research leading to these results was supported by the European Research Council
under the ERC Grant Agreement no. 258345.}

\begin{abstract}
Let $G$ be an edge-coloured graph.
A rainbow subgraph in $G$ is a subgraph such that its edges have distinct colours.
The minimum colour degree $\delta^c(G)$ of $G$ is the smallest number of distinct colours on the edges incident with a vertex of~$G$.
We show that every edge-coloured graph $G$ on $n\geq 7k/2+2$ vertices with  $\delta^c(G) \geq k$ contains a rainbow matching of size at least $k$, which improves the previous result for $k \ge 10$.

Let $\Dmon(G)$ be the maximum number of edges of the same colour incident with a vertex of $G$. 
We also prove that if $t \ge 11$ and $\Dmon(G) \le t$, then $G$ can be edge-decomposed into at most $\lfloor tn/2 \rfloor $ rainbow matchings.
This result is sharp and improves a result of LeSaulnier and West.
\end{abstract}

\begin{keyword}
edge coloring \sep rainbow \sep matching

\MSC[2010] 05C15 \sep 05C70
\end{keyword}

\end{frontmatter}

\section{Introduction}

Let $G$ be a simple graph, that is, it has no loops or multi-edges. We write $V(G)$ for the vertex set of $G$ and $\delta(G)$ for the minimum degree of $G$. 
An \emph{edge-coloured graph} is a graph in which each edge is assigned a colour.
We say that an edge-coloured graph $G$ is \emph{proper} if no two adjacent edges have the same colour.
A subgraph $H$ of $G$ is \emph{rainbow} if all its edges have distinct colours.
Rainbow subgraphs are also called totally multicoloured, polychromatic, or heterochromatic subgraphs.

In this paper, we are interested in rainbow matchings in edge-coloured graphs.
The study of rainbow matchings began with a conjecture of Ryser~\cite{ryser}, which states that every Latin square of odd order contains a Latin transversal. 
Equivalently, for $n$ odd, every properly $n$-edge-colouring of $K_{n,n}$, the complete bipartite graph with $n$ vertices on each part, contains a rainbow copy of a perfect matching.
In a more general setting, given a graph $H$, we wish to know if an edge-coloured graph $G$ contains a rainbow copy of $H$.
A survey on rainbow matchings and other rainbow subgraphs in edge-coloured graphs can be found in \cite{kano}.

For a vertex $v$ of an edge-coloured graph $G$, the \emph{colour degree}, $d^c(v)$, of $v$ is the number of distinct colours on the edges incident with~$v$.
The smallest colour degree of all vertices in $G$ is the \emph{minimum colour degree of $G$} and is denoted by $\delta^c(G)$.
Note that a properly edge-coloured graph $G$ with $\delta(G) \geq k$ has $\delta^c(G) \geq k$. 

Li and Wang~\cite{wang2} showed that if $\delta^c(G)=k$, then $G$ contains a rainbow matching of size $\lceil (5k-3)/{12} \rceil$.
They further conjectured that if $k\geq 4$, then $G$ contains a rainbow matching of size $\lceil k/2 \rceil$.
LeSaulnier et al.~\cite{lesaul} proved that if $\delta^c(G) = k$, then $G$ contains a rainbow matching of size $\lfloor k/2 \rfloor$.
The conjecture was later proved in full by Kostochka and Yancey~\cite{kostochka2}.

Wang~\cite{wang1} asked does there exist a function $f(k)$ such that every properly edge-coloured graph $G$ on $n \ge f(k)$ vertices with $\delta(G) \ge k$ contains a rainbow matching of size at least $k$.
Diemunsch et al.~\cite{diemunsch2} showed that such function does exist and $f(k) \le 98k/23$.
Gy\'arf\'as and Sarkozy~\cite{gyarfas} improved the result to $f(k) \le 4k-3$.
Independently, Tan and the author~\cite{rainbow} showed that $f(k) \le 4k-4$ for $k \ge 4$.

Kostochka, Pfender and Yancey~\cite{kostochka1} showed that every (not necessarily properly) edge-coloured $G$ on $n \ge  17k^2/4$ vertices with  $\delta^c(G)\geq k$ contains a rainbow matching of size~$k$.
Tan and the author~\cite{rainbow} improved the bound to $n \geq 4k-4$ for $k \ge 4$.
In this paper we show that $n \ge 7k/2 +2$ is sufficient.

\begin{thm} \label{rainbowmatching}
Every edge-coloured graph $G$ on $n \ge 7k/2+2$ vertices with $\delta^c(G) \geq k$ contains a rainbow matching of size $k$. 
\end{thm}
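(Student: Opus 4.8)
The plan is to argue by contradiction. Suppose $G$ has no rainbow matching of size $k$, and let $M=\{x_1y_1,\dots,x_my_m\}$ be a rainbow matching of maximum size, so that $m\le k-1$; write $c(e)$ for the colour of an edge $e$. Let $U:=V(G)\setminus V(M)$ be the set of uncovered vertices; then $|U|=n-2m$, which by hypothesis rearranges to $|U|\ge \tfrac32 m+\tfrac72(k-m)+2$. Call a colour \emph{fresh} if it is none of the $m$ colours $c(x_1y_1),\dots,c(x_my_m)$ of $M$. I would aim to prove the opposite estimate $|U|\le \tfrac32 m+\tfrac72(k-m)+1$, a contradiction; informally this says that each edge of $M$ is responsible for only about $\tfrac32$ of the uncovered vertices, up to an error controlled by $k-m$.

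First I would record the routine consequences of the maximality of $M$: (i) every edge with both ends in $U$ has a non-fresh colour, else $M$ extends; (ii) for distinct $u,v\in U$ and any index $i$ we cannot have fresh edges $ux_i$ and $vy_i$ with $c(ux_i)\neq c(vy_i)$, since then $M-x_iy_i+ux_i+vy_i$ would be a larger rainbow matching, and symmetrically with $x_i$ and $y_i$ interchanged; (iii) if $ux_i$ is fresh, or has colour $c(x_iy_i)$, then $M-x_iy_i+ux_i$ is again a maximum rainbow matching, now missing $y_i$ instead of $u$ and, in the fresh case, with $c(x_iy_i)$ freed up; and (iv) since $d^c(u)\ge k$, at most $m$ of the colours at $u$ lie among the colours of $M$, and by (i) the colours on edges from $u$ into $U$ all do, so every $u\in U$ is incident with edges of at least $k-m$ distinct fresh colours, necessarily going to $V(M)$. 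For each $i$ let $P_i$ (respectively $Q_i$) be the set of $u\in U$ with a fresh edge to $x_i$ (respectively $y_i$). Picking, for each $u\in U$ and each fresh colour at it, a witnessing edge into $V(M)$ and then sorting these witnesses according to their endpoint in $V(M)$, property (iv) yields
\[ |U|\,(k-m)\ \le\ \sum_{i=1}^{m}\bigl(|P_i|+|Q_i|\bigr); \]
in particular, since $|P_i|,|Q_i|\le|U|$, this already forces $k-m\le 2m$, i.e.\ $m\ge k/3$.

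It thus remains to bound $\sum_i(|P_i|+|Q_i|)$ from above by $\tfrac32 m(k-m)$ together with the lower-order terms needed so that dividing by $k-m$ reproduces $|U|\le\tfrac32 m+\tfrac72(k-m)+1$ (concretely, by something like $\tfrac32 m(k-m)+\tfrac72(k-m)^2+(k-m)$). The starting point is (ii), which shows that unless $|P_i\cup Q_i|$ is bounded by an absolute constant, there is a single fresh colour $\gamma_i$ carried by all but at most one of the fresh edges at $x_i$ and by all but at most one at $y_i$. The substantive step is to show that $|P_i|$ and $|Q_i|$ cannot stay large even then: given many $u\in P_i$ with $c(ux_i)=\gamma_i$, I would rotate $x_iy_i$ out of $M$ as in (iii) to obtain a maximum rainbow matching whose uncovered set contains $y_i$ and all the remaining such $u$ and which has the extra free colour $c(x_iy_i)$; re-running (i)--(iv) against this new matching, and augmenting outright via (ii)--(iii) whenever $c(uy_i)$ or $c(vx_i)$ equals $c(x_iy_i)$, should bring $|P_i|$ and $|Q_i|$ down to $O(k-m)$. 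One also has to prevent too many edges of $M$ from being heavily loaded at once; the natural safeguard is to fix $M$ extremally among all maximum rainbow matchings --- for instance so as to maximise the number of colours appearing on edges between $U$ and $V(M)$ --- so that none of the rotations above can increase that quantity, which pins down the global structure.

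The main obstacle, I expect, is precisely this last point: proving the bound on $\sum_i(|P_i|+|Q_i|)$ with the exact constants required --- in effect, that the fresh attachments of $U$ to $V(M)$ cannot be spread any further than in the extremal configuration. This is where essentially all the case analysis lives (on the pairs $(c(ux_i),c(uy_i))$ for $u\in U$, and on chains of rotations of $M$), and where the improvement from the previously known $4k$ down to $7k/2$ is won or lost; by contrast the remaining bookkeeping --- the $O(1)$ exceptional vertices per matching edge, and the case $k-m=1$, in which the final inequality is essentially tight --- should be routine.
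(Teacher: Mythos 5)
Your approach is genuinely different from the paper's. You propose a direct counting-plus-rotation argument around a maximum rainbow matching, bounding the total number of ``fresh'' attachments from $U$ to $V(M)$; the paper instead builds a machinery of $(C,\ell)$-adapters and proceeds by induction on $k$, choosing a vertex partition $\{W_1,\dots,W_p,U\}$ of $V(G)$ whose parameters are lexicographically maximal. Your framework is recognisably in the style of Lo--Tan (which gave $n\ge 4k-4$), whereas the adapter machinery is precisely what the paper introduces to push below $4k$.

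The gap is the step you flag as ``where essentially all the case analysis lives,'' and I do not think the rotation argument you outline can close it with the constants you need. Your lower bound $|U|(k-m)\le \sum_i(|P_i|+|Q_i|)$ is fine, but to contradict $|U|\ge\frac32 m+\frac72(k-m)+2$ you must prove $\sum_i(|P_i|+|Q_i|)\le \frac32 m(k-m)+O\bigl((k-m)^2\bigr)$, i.e.\ on average $|P_i|+|Q_i|\lesssim\frac32(k-m)$. The rotations you describe can plausibly give a bound of the form $|P_i|,|Q_i|=O(k-m)$ per edge, but then $\sum_i(|P_i|+|Q_i|)\le 2m\cdot O(k-m)$, and unless the implicit constant is at most $3/4$ (fewer than one vertex of $P_i$ per fresh colour), you do not beat $\frac32 m(k-m)$. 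Nothing in the sketch --- neither the single-colour structure from (ii), nor freeing $c(x_iy_i)$ via (iii), nor the extremal choice of $M$ --- explains where a constant that small would come from; moreover, the case $m=k-1$ (which you call routine) is exactly where the slack term $\frac72(k-m)^2$ collapses to $\frac72$ and the bound is tightest, so the entire weight of the argument falls there. In short, the skeleton is reasonable but the quantitative core is unproved, and it is not clear it is even true at this level of generality; the paper's adapter partition and the inductive re-use of the lemma on $H_1=G-W_1$ give structural control (in effect, amortising the $3\ell_i+1$ vertices per adapter and the $\gamma(\ell_0+1)$ vertices of $U$) that a static count of $|P_i|+|Q_i|$ against a single maximum matching does not.
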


Moreover if $G$ is bipartite, then we further improve the bound to $n \ge (3 + \varepsilon) k + \varepsilon^{-2}$.

\begin{thm} \label{rainbowmatching2}
Let $0< \varepsilon \le 1/2$ and $k \in \mathbb{N}$.
Every edge-coloured bipartite graph $G$ on $n \ge (3+ \varepsilon)k + \varepsilon^{-2}$ vertices with $\delta^c(G) \geq k$ contains a rainbow matching of size~$k$. 
\end{thm}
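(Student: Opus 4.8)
The plan is to argue by contradiction. Suppose $G$ has no rainbow matching of size $k$ and let $M=\{x_1y_1,\dots,x_my_m\}$ be a rainbow matching of maximum size $m$; then $m\le k-1$, and we may name the vertices so that $x_1,\dots,x_m$ lie in one part $X$ of the bipartition and $y_1,\dots,y_m$ in the other part $Y$. Among all maximum rainbow matchings I would in addition fix $M$ to be extremal for a suitable secondary parameter --- the natural candidate being to maximise $\sum_{u\in U}d^c_{\mathrm{new}}(u)$, where $U=V(G)\setminus V(M)$ is the set of uncovered vertices and $d^c_{\mathrm{new}}(u)$ counts the colours at $u$ not appearing on $M$ --- with the precise choice to be pinned down by the endgame. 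Write $U_X=U\cap X$ and $U_Y=U\cap Y$. Since $m\le k-1$, the hypothesis $n\ge(3+\varepsilon)k+\varepsilon^{-2}$ yields $|U|=n-2m\ge(1+\varepsilon)m+\varepsilon^{-2}+3$, and we may assume $|U_X|\ge|U_Y|$, so $|U_X|\ge|U|/2$.

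First I would record the consequences of maximality. No edge inside $U$ --- necessarily between $U_X$ and $U_Y$, as $G$ is bipartite --- can carry a colour outside $c(M)$, or else $M$ extends. Hence each $u\in U_X$ has at least $k-m\ge 1$ colours outside $c(M)$, and all of them occur only on edges to $\{y_1,\dots,y_m\}$; since distinct colours lie on distinct edges, $u$ sends a \emph{fresh} edge (one with colour outside $c(M)$) to at least $k-m$ of the $y_i$, and symmetrically each $u\in U_Y$ sends fresh edges to at least $k-m$ of the $x_i$. The key bipartite feature is that fresh edges from $U_X$ land only on the $Y$-side of $M$ and fresh edges from $U_Y$ only on the $X$-side, and it is this asymmetry that should let us improve on the bound of Theorem~\ref{rainbowmatching}. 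Then the rotation step: if for some $i$ there is a fresh edge $uy_i$ with $u\in U_X$ of colour $a$ together with a fresh edge $u'x_i$ with $u'\in U_Y$ of colour $b\ne a$, then $M-x_iy_i+uy_i+u'x_i$ is a rainbow matching of size $m+1$, a contradiction (note $u\ne u'$ automatically, as $X\cap Y=\emptyset$). So for every index $i$ hit by a fresh edge from both sides, a single colour $c_i^*$ appears on all fresh edges from $U_X$ to $y_i$ and from $U_Y$ to $x_i$. A refinement, rotating two matching edges simultaneously and then attempting a further extension, together with the secondary extremality of $M$, should dispose of the remaining obstructing configurations.

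Finally comes the counting, and this is the hard part. I would classify the indices $1,\dots,m$ according to whether $y_i$ is hit freshly from $U_X$, whether $x_i$ is hit freshly from $U_Y$, and --- for the doubly hit ones --- according to $c_i^*$ and to how many fresh edges land there. The fresh edges from $U_X$ number at least $|U_X|(k-m)$ and are supported on the $Y$-side; playing this lower bound against the common-colour restriction and a double count of fresh colours (charging each fresh colour at a vertex of $U_X$ to the index it reaches) should contradict $|U_X|\ge|U|/2$. The term $\varepsilon^{-2}$ is consumed through a dichotomy: if $|U_X|\le(1+\varepsilon)m$, one wins by a direct count of colours on $M$ against colours seen from $U$; if $|U_X|>(1+\varepsilon)m$, the surplus of more than $\varepsilon m$ fresh edges lets one locate, by averaging, an index carrying many fresh edges of one colour together with a fresh edge from the opposite side, and pushing this through a $(1/\varepsilon)$-scale iteration (or a quadratic bound of the shape $|U_X|-(1+\varepsilon)m\le O(1/\varepsilon)$) closes the gap. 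The delicate point throughout is keeping the rotations vertex-disjoint while simultaneously tracking the colours $c_i^*$ and the edges inside $U$; the rest is routine bookkeeping.
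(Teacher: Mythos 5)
Your proposal does not give a proof; it is a plan, and the places you mark with ``should'' are exactly where the substance would have to go. The elementary part you do carry out is correct: if $M$ is a maximum rainbow matching with parts split as $\{x_i\}\subseteq X$, $\{y_i\}\subseteq Y$, then $G[U]$ carries only colours of $M$, each $u\in U_X$ sends at least $k-m$ fresh edges to $\{y_1,\dots,y_m\}$ (and symmetrically for $U_Y$), and a single one-edge rotation shows that any index hit freshly from both sides has a unique fresh colour $c_i^*$. That is all standard. But from there on the argument is not pinned down, and the three things you postpone are not bookkeeping --- they are the theorem. (1) You never fix the secondary extremality of $M$; without a concrete choice, the two-edge rotation step has no invariant to work against. (2) The structure at singly-hit indices is untouched: if $y_i$ is hit only from $U_X$, the fresh edges $uy_i$ for different $u$ can carry arbitrary distinct colours, and nothing you wrote restricts them; yet your ``fresh-edge count versus common-colour restriction'' argument quietly treats every index as if it enjoyed a $c_i^*$-type restriction. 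Without a restriction at singly-hit indices the lower bound $|U_X|(k-m)$ on fresh edges simply says some $y_i$ has at least $|U_X|(k-m)/m$ fresh $U_X$-neighbours, which is not in itself contradictory. (3) The dichotomy and the ``$1/\varepsilon$-scale iteration'' that is supposed to consume the $\varepsilon^{-2}$ term is exactly where the paper has to do real work: repeated rotations are not independent (they contend for vertices and for colours of $M$), and managing that interaction is what the paper's $(C,\ell)$-adapter partition with lexicographically maximal parameters is for. Gesturing at ``pushing this through a quadratic bound'' does not replace that machinery.

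For contrast, the paper's route is structurally different from yours. It first isolates the quantity $\gamma(\mathcal{G})$ (for bipartite graphs $\gamma=2$, Proposition~2.1), then proves a single lemma (Lemma~2.5) by induction on $k$: given a rainbow $(k-1)$-matching, one builds a vertex partition into $C_i$-adapters $W_i$ (each on $3\ell_i+1$ vertices, robustly carrying a rainbow matching on $C_i$) plus a remainder $U$, chosen to make $(\ell_1,\dots,\ell_p)$ lexicographically maximal. The adapter structure is precisely what lets one do the ``many simultaneous rotations'' coherently: Claim~2.6 shows any edge from $W$ into the uncovered part of $U$ must use a colour in $C$, which bounds $|U|$ by $\gamma(\ell_0+1)$, and then a calculation using $\ell_1\geq q\geq(k-2\ell_0)/(\ell_1+1)$ and $(\gamma-2)\ell_1<2$ produces the contradiction; the $\varepsilon^{-2}$ in the bound comes out of $2(4-\gamma)/(\gamma-2)^2$ with $\gamma=2+2\varepsilon$. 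Your sketch reinvents the shadows of these steps (your $c_i^*$ is the germ of an adapter; your ``$1/\varepsilon$-scale iteration'' is the germ of the lexicographic extremal choice) but does not supply the mechanism that makes them compose. As written, the proposal is incomplete in the load-bearing places and cannot be certified as a proof.
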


We also consider covering an edge-coloured graph $G$ by rainbow matchings.
Given an edge-coloured graph $G$, let $\Dmon(G)$ be the largest maximum degree of monochromatic subgraphs of~$G$.
LeSaulnier and West~\cite{LeSaulnierWest} showed that every edge-coloured graph $G$ on $n$ vertices with $\Dmon(G) \le t$ has an edge-decomposition into at most $t(1+t) n \ln n$ rainbow matchings.
We show that $G$ can be edge-decomposed into $\lfloor t n/2 \rfloor$ rainbow matchings provided $t \ge 11$. 

\begin{thm} \label{thm:rainmatchigndecom}
For all $t \ge 11$, every edge-coloured graph $G$ on $n$ vertices with $\Dmon(G) \le t$ can be edge-decomposed into $\lfloor t n/2 \rfloor$ rainbow matchings.
\end{thm}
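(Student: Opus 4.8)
The plan is to argue by induction, peeling off the edges of one colour at a time, with the trivial base case $e(G)\le\lfloor tn/2\rfloor$ (put each edge in its own rainbow matching). So assume $e(G)>\lfloor tn/2\rfloor$. The heart of the inductive step is an \emph{absorption} argument: if $C$ is the colour class of some colour $c$, apply induction to $G-E(C)$ (which still has $\Dmon\le t$) to obtain a decomposition into rainbow matchings $M_1,\dots,M_s$, padding with empty matchings so that $s=\lfloor tn/2\rfloor$; then try to insert the edges of $C$ into the $M_i$ without creating new matchings. Since every edge of $C$ has colour $c$ and no $M_i$ contains a colour-$c$ edge, each $M_i$ can receive at most one edge of $C$, and it can receive $e=uv$ exactly when $e$ is disjoint from $V(M_i)$; as the number of $M_i$ meeting a fixed vertex $w$ is $d_{G-E(C)}(w)\le n-1$, in the bipartite ``absorption graph'' between $E(C)$ and $\{M_1,\dots,M_s\}$ every edge of $C$ has degree at least $s-2(n-1)$. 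Hence, as long as $e(C)\le\lfloor tn/2\rfloor-2(n-1)$, Hall's condition holds trivially and the whole of $C$ is absorbed, completing the step.

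It remains to treat the case where \emph{every} colour class has more than $\lfloor tn/2\rfloor-2(n-1)$ edges. Since $e(G)\le\binom n2$, this forces the number of colours to be roughly at most $n/(t-4)$, and each colour class to be nearly $t$-regular, so in particular to have a vertex cover of size close to $n/2$. I would again peel off one colour class $C$, decompose $G-E(C)$ by induction, and try to absorb $C$; now Hall's condition in the absorption graph can fail only for sets $S\subseteq E(C)$ with $|S|$ close to $e(C)$, and a failure would force many $M_i$ to meet every edge of $C$, i.e.\ to contain a matching saturating a vertex cover of $C$, hence to have size at least $\tfrac12\tau(C)\ge cn$ for an absolute constant $c>0$. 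Since $\sum_i|M_i|=e(G-E(C))$, only few $M_i$ can be this large; the remaining work is to push this count below $\lfloor tn/2\rfloor-e(C)$ using that $C$ is almost $t$-regular (so $\tau(C)$ is large) and that there are few colours (so $e(G-E(C))$ is not too large). When this counting is too tight to win outright, one instead builds the required number of near-transversal rainbow matchings directly, invoking Theorem~\ref{rainbowmatching}: because $\Dmon(G)\le t$, a subgraph with minimum degree $d$ has minimum colour degree at least $d/t$, and the hypothesis $t\ge 11$ is exactly what makes $d/t$ large enough, relative to the $7k/2+2$ vertex requirement, for Theorem~\ref{rainbowmatching} to supply a rainbow matching of the size needed.

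The main obstacle is this last case: the easy Hall argument handles all ``small'' colours for free, and the sharpness example (a $t$-regular monochromatic graph) shows why nothing better than $\lfloor tn/2\rfloor$ can be hoped for, so all the difficulty is concentrated in the regime of a bounded number of almost-$t$-regular colour classes, where the budget $\lfloor tn/2\rfloor$ is essentially tight and every rainbow matching must be an almost-perfect transversal of the colour classes. I expect that closing this case cleanly requires combining the absorption/Hall bookkeeping above with a transversal-type construction (e.g.\ Vizing-decomposing each class into $t+1$ matchings and scheduling these) and that verifying the numerical inequalities there — where the constant $11$ is pinned down — is the most delicate part of the argument.
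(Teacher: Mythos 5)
Your overall framework matches the paper's: both proceed colour class by colour class, maintaining a partial decomposition into $\lfloor tn/2\rfloor$ (possibly empty) rainbow matchings, and both verify Hall's condition in the auxiliary bipartite ``absorption'' graph between the edges of the current colour class and the partial matchings. Your ``easy case'' ($e(C)\le\lfloor tn/2\rfloor-2(n-1)$), where every vertex of the absorption graph on the $E(C)$ side already has degree at least $|S|$ for all $S$, is essentially the paper's Case~2.

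But there is a genuine gap in how you handle the remaining colour classes, and you are missing the device that makes the paper's argument close. The paper orders the colour classes so that $e(G^1)\ge e(G^2)\ge\dots\ge e(G^p)$ and inserts them in that order. The crucial consequence is that when colour $i$ is being absorbed, each partial matching $M_j$ is rainbow with colours drawn only from $\{1,\dots,i-1\}$, so $|M_j|\le i-1$ and $|V(M_j)|\le 2(i-1)$. This immediately gives two things you do not have: (1) if $S\subseteq E(G^i)$ contains a matching of size $2i-1$, then \emph{every} $M_j$ misses some edge of $S$, so $N_H(S)=\mathcal{M}$ and Hall holds outright; and (2) if $S$ contains no such matching, Vizing's theorem applied to the subgraph with edge set $S$ (maximum degree $\le t$) gives $|S|\le 2(i-1)(t+1)$, which for $i\le\frac{(t-4)n}{4(t+1)}$ is at most $(t-4)n/2\le|N_H(S)|$. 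The paper's two cases are exactly ``$i$ small'' (handled by Vizing as above) and ``$i$ large'' (where $e(G^i)\le\binom n2/i$ is small by the monotone ordering). Neither case needs Theorem~\ref{rainbowmatching}, vertex covers, or near-transversals.

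Your replacement plan for the hard case --- counting matchings that are forced to cover a near-$t$-regular colour class and then invoking Theorem~\ref{rainbowmatching} to build near-perfect rainbow transversals --- is left explicitly unfinished (``the remaining work is to push this count below\dots'', ``I expect that closing this case cleanly requires\dots''), and it is unlikely to close as described. In particular, invoking Theorem~\ref{rainbowmatching} via the crude bound $\delta^c\ge\delta/t$ does not obviously produce the precise number of edge-disjoint rainbow matchings needed, and the vertex-cover counting you sketch says only that a few $M_j$ can be large, not that the failing set $S$ is small relative to the number of available $M_j$. The missing ideas you need are: process colours in decreasing order of size, exploit the resulting bound $|M_j|\le i-1$, and bound $|S|$ via Vizing when $S$ has small matching number.
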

Note that the bound is best possible by considering edge-coloured graphs, where one colour class induces a $t$-regular graph.

Theorems~\ref{rainbowmatching} and~\ref{rainbowmatching2} are proved in Section~\ref{sec:rainbow}.
Theorem~\ref{thm:rainmatchigndecom} is proved in Section~\ref{sec:decom}.

\section{Existence of rainbow matchings} \label{sec:rainbow}

We write $[k]$ for $\{1,2,\ldots,k\}$.
Let $G$ be a graph with an edge-colouring~$c$. 
We denote by $c(G)$ the set of colours in $G$.
We write $|G|$ for $|V(G)|$.
Given $W \subseteq V(G)$, $G[W]$ is the induced subgraph of $G$ on~$W$.
All colour sets are assumed to be finite.

Before proving Theorems~\ref{rainbowmatching} and~\ref{rainbowmatching2}, we consider the following (weaker) question.
Suppose that $G$ is an edge-coloured graph and contains a rainbow matching $M$ of size $k-1$.
Under what colour degree and $|G|$ conditions can we `extend' $M$ into a matching of size $k$ with at least $k-1$ colours?
We formalise the question below. 

Let $\mathcal{G}$ be a family of graphs closed under vertex/edge deletions.
Define $\gamma(\mathcal{G})$ to be the smallest constant $\gamma$ such that, whenever $k \in \mathbb{N}$, $G \in \mathcal{G}$ is a graph with $|G| \ge \gamma k$ and an edge-colouring~$c$ on $G$, the following holds.
If for any rainbow matching $M$ of size~$k-1$ in~$G$, we have $d^c(z) \ge k$ for all $z \in V(G) \setminus V(M)$, then $G$ contains a rainbow matching $M'$ of size $k-1$ and a disjoint edge.
(Note that the colour of the disjoint edge may appear in $M'$.)
Clearly, $\gamma(\mathcal{G}) \ge 2$ for any family $\mathcal{G}$ of graphs.
It is easy to see that equality holds if $\mathcal{G}$ is the family of bipartite graphs.

\begin{prp} \label{prp:alpha}
Let $\mathcal{G}$ be the family of bipartite graphs.
Then $\gamma(\mathcal{G}) =2$.
\end{prp}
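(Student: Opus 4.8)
The plan is to prove the two inequalities $\gamma(\mathcal{G}) \ge 2$ and $\gamma(\mathcal{G}) \le 2$ separately, the first being the bound already flagged as clear in the text. For $\gamma(\mathcal{G}) \ge 2$ I would exhibit, for each large $k$, the graph $G$ consisting of $k-1$ independent edges given $k-1$ distinct colours: every rainbow matching of size $k-1$ in $G$ equals $G$ itself and hence saturates $V(G)$, so the colour-degree condition holds vacuously, yet $G$ has no edge disjoint from such a matching; since $|G| = 2(k-1)$, this rules out every $\gamma < 2$.

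The substance is $\gamma(\mathcal{G}) \le 2$, i.e.\ that $|G| \ge 2k$ always suffices. So let $G$ be a bipartite graph with parts $A$ and $B$ and an edge-colouring $c$, assume $|G| \ge 2k$, and assume the hypothesis holds: $G$ contains a rainbow matching of size $k-1$ (this existence is implicit in the set-up, as in the paragraph preceding the proposition, since otherwise the colour-degree condition is vacuous while the conclusion fails), and every vertex lying outside some rainbow matching of size $k-1$ has colour degree at least $k$. Fix one rainbow matching $M$ of size $k-1$ and put $U = V(G) \setminus V(M)$; then $|U| = |G| - 2(k-1) \ge 2$, so I may choose a vertex $z \in U$.

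Next I would apply the hypothesis to $z$: $d^c(z) \ge k$, and since a vertex has at least as many neighbours as distinct colours on its incident edges, $z$ has at least $k$ neighbours in $G$. Now bipartiteness does the work. Assume $z \in A$ (the case $z \in B$ is symmetric). All neighbours of $z$ lie in $B$, whereas $B \cap V(M)$ contains exactly one endpoint of each of the $k-1$ edges of $M$, so $|B \cap V(M)| = k-1 < k$. Hence $z$ has a neighbour $z' \in B \setminus V(M) \subseteq U$, and the edge $zz'$ is disjoint from $M$. Then $M$ together with the edge $zz'$ is a rainbow matching of size $k-1$ plus a disjoint edge, which is exactly what is required.

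I do not expect a real obstacle here; the proposition is meant as the easy bipartite base case of the extension framework, and its point is precisely the contrast with the general (non-bipartite) setting, where a vertex outside the matching can have neighbours on both sides and the naive count $k-1 < k$ no longer closes the argument. The only things needing care are the reading of the definition of $\gamma$ (the existence of a size-$(k-1)$ rainbow matching has to be read into the hypothesis) and the observation that $|B \cap V(M)| = k-1$ falls exactly one short of the colour-degree bound $k$, which is what forces the extra neighbour.
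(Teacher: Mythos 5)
Your proof is correct and uses essentially the same argument as the paper: pick a vertex $z$ outside the rainbow matching $M$ (possible since $|G| \ge 2k > 2(k-1)$), note $d^c(z) \ge k$ forces $z$ to have at least $k$ neighbours, and since $G$ is bipartite all of these lie on one side, of which only $k-1$ are saturated by $M$, leaving a neighbour outside $V(M)$ and hence an edge disjoint from $M$. The paper compresses this to a single sentence (``Since $G$ is bipartite, there exists an edge vertex-disjoint from $M$''), but the underlying counting is exactly what you spell out; your explicit lower-bound construction for $\gamma(\mathcal{G})\ge 2$ also matches what the paper treats as ``clear.''
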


\begin{proof}
Let $G$ be a bipartite graph on at least $2k$ vertices. 
Suppose that $M$ is a rainbow matching of size~$k-1$ and that $d^c(z) \ge k$ for all $z \in V(G) \setminus V(M)$.
Since $G$ is bipartite, there exists an edge vertex-disjoint from $M$ and so the proposition follows. 
\end{proof}

If $\mathcal{G}$ is the family of all graphs, we will show that $\gamma(\mathcal{G}) \le 3$.

\begin{lma}
Let $G$ be a graph with at least $3(k-1)+1$ vertices.
Suppose that $M$ is a rainbow matching of size~$k-1$ and that $d^c(z) \ge k$ for all $z \in V(G) \setminus V(M)$.
Then $G$ contains a rainbow matching $M'$ of size $k-1$ and a disjoint edge.
\end{lma}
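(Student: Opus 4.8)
The plan is to argue by contradiction: assume $G$ contains no rainbow matching of size $k-1$ together with a disjoint edge. Write $U:=V(M)$ and $W:=V(G)\setminus V(M)$, so $|U|=2(k-1)$ and $|W|\ge 3(k-1)+1-2(k-1)=k$. If $W$ contains an edge, then $M$ itself together with that edge is a valid configuration; so $W$ is independent. Consequently every $z\in W$ sends all its edges into $U$, so $z$ has $d^c(z)\ge k$ edges, hence at least $k$ neighbours in $U$; and since $|c(M)|=k-1<k$, at least one edge at $z$ has a colour outside $c(M)$.

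The engine is a \emph{swap-and-attach} move. Let $\psi$ be the partner involution on $U$ defined by $M$ (so the edges of $M$ are the pairs $\{x,\psi(x)\}$). Call $x\in U$ a \emph{good target} of $z\in W$ if $zx\in E(G)$ and $c(zx)\notin c(M)\setminus\{c(e)\}$, where $e$ is the edge of $M$ through $x$; the endpoints of the ``fresh'' edges at $z$ are good targets, so $z$ has at least one good target. If $x$ is a good target of $z$ and $z'\psi(x)\in E(G)$ for some $z'\in W\setminus\{z\}$, then $(M\setminus\{e\})\cup\{zx\}$ is a rainbow matching of size $k-1$ and $z'\psi(x)$ is vertex-disjoint from it — exactly the forbidden configuration. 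Hence, writing $R(z)$ for the set of good targets of $z$, \emph{no vertex of $\psi(R(z))$ has a neighbour in $W\setminus\{z\}$}.

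Next a double count pins down the structure. Each $z\in W$ has at least $k$ neighbours in $U$, none of them in $\bigcup_{z'\in W\setminus\{z\}}\psi(R(z'))$; hence $\bigl|\bigcup_{z'\ne z}\psi(R(z'))\bigr|\le |U|-k=k-2$ for every $z$. Since each $R(z)$ is non-empty and $|W|\ge k$, a short averaging argument upgrades this to $\bigl|\bigcup_{z\in W}R(z)\bigr|\le k-2$. Set $T:=\bigcup_{z\in W}R(z)$, so $|T|\le k-2$; similarly $U_0$, the set of vertices of $U$ with no neighbour in $W$, has $|U_0|\le k-2$, and $\psi(T)$ lies among the vertices of $U$ with at most one neighbour in $W$. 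In particular every $z\in W$ has at least $k-|T|\ge 2$ neighbours in $U\setminus T$, and each such edge is ``bad'': it lands on some $e_i$ but carries a colour $c(e_j)$ with $j\ne i$.

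The final step combines this with the colour-degree hypothesis. A vertex $z\in W$ whose neighbourhood in $U$ is forced to be small (which the bounds on $|U_0|$ and $|T|$ push towards) must still realise at least $k$ distinct colours on its fewer than $2(k-1)$ edges; tracking how these colours split between good-target edges (at most $|T|\le k-2$ of them) and bad edges, one locates a colour $c(e_j)$ that $z$ carries only on bad edges, then removes both $e_i$ and $e_j$ (where $c(zx)=c(e_j)$, $x\in e_i$), re-attaches $zx$, and uses a further $W$-vertex to build a rainbow $(k-1)$-matching plus a disjoint edge, contradicting the assumption. I expect this endgame to be the main obstacle: the two-edge swap must be arranged so that the colour reintroduced by $zx$ does not clash and so that a disjoint edge genuinely survives, which is delicate precisely when the partners of the vertices involved lie in $U_0$; handling this should require a careful case analysis resting on $|T|\le k-2$, $|U_0|\le k-2$ and $|W|\ge k$.
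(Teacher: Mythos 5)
The sound part of your proposal ends about two-thirds of the way through. The setup (reduce to $W$ independent, so each $z\in W$ has at least $k$ neighbours in $U$, one of them a ``good target''), the swap observation that for $x\in R(z)$ the partner $\psi(x)$ has no neighbour in $W\setminus\{z\}$, and the deduction $|U_0|\le k-2$ are all correct. The bound $|T|\le k-2$ also works, though it deserves more than a wave at ``averaging'': if $|T|\ge k-1$, for each $z$ pick $v_z\in\psi(T)\setminus\bigcup_{z'\ne z}\psi(R(z'))$ (possible since that union has size $\le k-2$); the $v_z$ are distinct and each $v_z$ has no neighbour in $W\setminus\{z\}$, so any fixed $z^*$ has $N(z^*)\subseteq U\setminus\{v_z:z\ne z^*\}$, giving $|N(z^*)|\le 2(k-1)-(|W|-1)\le k-1$, a contradiction. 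So far so good.

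The genuine gap is the ``endgame,'' which you yourself flag as the likely obstacle. Removing $e_i$ and $e_j$ and inserting $zx$ produces only a rainbow matching of size $k-2$ (colours $c(M)\setminus\{c(e_i)\}$), with the loose vertices $\psi(x)$, $y$, $\psi(y)$ and $W\setminus\{z\}$; you then need to append one more edge to reach size $k-1$ \emph{and} exhibit a further disjoint edge, all while respecting rainbowness. Nothing in the bounds $|T|\le k-2$, $|U_0|\le k-2$ forces the existence of the ``further $W$-vertex'' edge with the needed colour, and a single double-swap move does not obviously suffice; the difficulty compounds rather than resolves, exactly as you anticipate. As written the proposal is an honest but incomplete sketch, not a proof.

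For contrast, the paper's proof uses an entirely different, iterative device: assuming the conclusion fails, it constructs vertices $z_{k-1},z_{k-2},\dots,z_1\in W$ one at a time, together with a nested family of vertex sets $T_i=\{x_j,y_j,z_j:j\ge i\}$ on which, for every colour $j'$, there is a rainbow matching of size $k-i$ avoiding $[i-1]\cup\{j'\}$. Each new $z_i$ is forced (via a counting step using $|W_{i+1}|\ge 1$) to attach to a $y_i$ with a colour outside $[i]$, and the invariant (c$_i$) shows that any $w\in W_i$ can only see the $y_j$'s. The final contradiction is simply that a vertex of $W_1$ then has degree, hence colour degree, at most $k-1$. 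The crucial point is that the paper maintains an inductive ``flexibility'' in the partial matchings (the family $M^i_{j'}$), which is what lets it absorb any single bad colour; your proposal tries to do a one-shot repair and that is precisely where it gets stuck.
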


\begin{proof}

Let $x_1y_1, \ldots, x_{k-1} y_{k-1}$ be the edges of~$M$ with $c(x_iy_i) = i$.
Let $W = V(G) \setminus V(M)$.
We may assume that $G[W]$ is empty or else the lemma holds easily.

Suppose the lemma does not hold for~$G$.
By relabeling the indices of $i$ and swapping the roles of $x_i$ and $y_i$ if necessary, we will show that there exist distinct vertices $z_{1}$, \dots, $z_{k-1}$ in $W$ such that for each $1 \le i \le k-1$, the following holds:
\begin{enumerate}
 \item[(a$_i$)] $y_i z_i$ is an edge and $c(y_iz_i) \notin [i]$.
 \item[(b$_i$)] Let $T_i$ be the vertex set $\{x_j, y_j, z_j : i \le j \le k-1\}$.
       For any colour $j'$, there exists a rainbow matching $M^i_{j'}$ of size $k-i$ on $T_i$ such that $ c(M^i_{j'}) \cap ([i-1] \cup \{j'\}) = \emptyset$.
 \item[(c$_i$)] Let $W_i = W \backslash \{z_i, z_{i+1}, \dots, z_{k-1} \}$.
       For all $w \in W_i$, $N(w) \cap T_i \subseteq \{ y_i, \dots, y_{k-1}\}$.

\end{enumerate} 
Let $W_k=W$ and $T_k=\emptyset$.
Suppose that we have already found $z_{k-1}, z_{k-2}, \ldots, z_{i+1}$.
We find $z_i$ as follows.

Note that $|W_{i+1}| \ge n - 2(k-1) - (k-i-1) \ge 1$, so $W_{i+1} \ne \emptyset$.
Let $z$ be a vertex in $W_{i+1}$. 
By the colour degree condition, $z$ must incident to at least $k$ edges of distinct colours, and in particular, at least $k-i$ distinct coloured edges not using colours in $[i]$. 
By~(c$_{i+1}$), $z$ sends at most $k-i-1$ edges to~$T_{i+1}$. 
So there exists a vertex $u \in V(M) \setminus T_{i+1} = \{x_j,y_j: 1\leq j \leq i\}$ such that $uz$ is an edge with $c(uz)\notin [i]$.
Without loss of generality, $u=y_i$ and we set $ z_i=z $. 
Clearly (a$_i$) holds.

We now show that (b$_i$) holds for any colour $j'$.
If $j' \neq i$, then by (b$_{i+1}$), there is a rainbow matching $M^{i+1}_{j'}$ of size $k-i-1$ on $T_{i+1}$ such that  $c(M^{i+1}_{j'}) \cap ([i] \cup \{j'\}) = \emptyset$.
Set $M^{i}_{j'} = M^{i+1}_{j'} \cup x_iy_i$.
So $M^{i}_{j'}$ is a rainbow matching on $T_i$ of size $k-i$ and moreover $c(M^{i}_{j'}) \cap ([i-1] \cup \{j'\}) = \emptyset$ as required.
If $j' = i$, then by~(b$_{i+1}$), there is a rainbow matching $M^{i+1}_{c(y_iz_i)}$ of size $k-i-1$ on $T_{i+1}$ such that  $c(M^{i+1}_{c(y_iz_i)}) \cap ([i] \cup \{c(y_iz_i)\}) = \emptyset$.
Set $M^{i}_{i} = M^{i+1}_{c(y_iz_i)} \cup y_iz_i$.
Note that $M^{i}_i$ is the desired rainbow matching.

Let $w t$ be an edge with $w \in W_i$ and $t \in T_i$. 
Since $G[W]$ is empty, $t \notin \{z_i, z_{i+1}, \dots, z_{k-1}\}$.
By~(c$_{i+1}$), $t \notin \{x_{i+1}, x_{i+2},\dots, x_{k-1}\}$.
Suppose that $t = x_i$. 
By~(b$_{i+1}$), there exists a rainbow matching $M^{i+1}_{c(y_iz_i)}$ of size $k-i-1$ on $T_{i+1}$ such that $c(M^{i+1}_{c(y_iz_i)}) \cap ( [i]\cup \{c(y_iz_i)\}) = \emptyset$.
Let $M'$ be the matching $\{x_j y_j : j \in [i-1] \} \cup M^{i+1}_{c(y_iz_i)} \cup \{y_i z_i\}$.
Note that $M'$ is a rainbow matching of size $k-1$ vertex-disjoint from the edge~$w x_i$. 
This contradicts the fact that $G$ is a counterexample.
Hence we have $t \in \{y_i, y_{i+1}, \dots, y_{k-1}\}$ implying~(c$_i$).

Therefore we have found $z_1, \dots, z_{k-1}$. 
Let $w \in W_1 \ne \emptyset$.
Recall the $G[W] = \emptyset$, so $N(w) \subseteq \{y_1, \dots, y_{k-1}\}$ by~(c$_1$), which implies that $d^c(w) \le d(w) \le k-1$, a contradiction. 
\end{proof}

\begin{cor} \label{cor:alpha}
Every family $\mathcal{G}$ of graphs satisfies $\gamma(\mathcal{G}) \le 3$.
\end{cor}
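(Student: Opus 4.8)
The plan is to read Corollary~\ref{cor:alpha} as an immediate consequence of the preceding Lemma: I will show that the constant $\gamma=3$ already witnesses the property defining $\gamma(\mathcal{G})$, for \emph{every} family $\mathcal{G}$ of graphs (so in particular the extremal $\gamma(\mathcal{G})$ is at most $3$).

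First I would unwind the definition of $\gamma(\mathcal{G})$ with $\gamma=3$. Fix $k\in\mathbb{N}$, a graph $G\in\mathcal{G}$ with $|G|\ge 3k$, and an edge-colouring $c$ of $G$ such that for every rainbow matching $M$ of size $k-1$ in $G$ we have $d^c(z)\ge k$ for all $z\in V(G)\setminus V(M)$; the goal is to produce a rainbow matching $M'$ of size $k-1$ together with a vertex-disjoint edge. Now fix one rainbow matching $M$ of size $k-1$ in $G$ (when $k=1$ this is the empty matching, and in general its existence is part of the extension setting that $\gamma$ formalises). Applying the universal hypothesis to this particular $M$ gives $d^c(z)\ge k$ for all $z\in V(G)\setminus V(M)$, which is exactly the colour-degree assumption of the Lemma. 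Since $3k\ge 3(k-1)+1$, the vertex-count hypothesis of the Lemma is met as well, so the Lemma supplies a rainbow matching $M'$ of size $k-1$ in $G$ and a disjoint edge, as required. As $k$, $G$ and $c$ were arbitrary, $3$ satisfies the condition defining $\gamma(\mathcal{G})$, whence $\gamma(\mathcal{G})\le 3$.

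There is essentially no obstacle here beyond the bookkeeping: the two things to verify are that the clause ``for any rainbow matching $M$ of size $k-1$'' in the definition of $\gamma$ — being a universal statement — does hand us the single matching $M$ that the Lemma requires, and that the chosen constant clears the Lemma's threshold, i.e.\ $3k\ge 3(k-1)+1=3k-2$, which is immediate. One may remark in passing that the Lemma in fact establishes the sharper vertex bound $|G|\ge 3(k-1)+1$, so rounding up to $3k$ in the statement of the corollary is a harmless simplification.
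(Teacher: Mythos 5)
Your proof is correct and is exactly the paper's intended argument: the Corollary is stated without explicit proof precisely because it is this one-line deduction from the preceding Lemma, using $3k\ge 3(k-1)+1$ and the observation that the universal colour-degree hypothesis in the definition of $\gamma$ specialises to the Lemma's hypothesis for any fixed rainbow matching $M$ of size $k-1$ (whose existence is indeed implicit in the definition of $\gamma$, as you note).
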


For colour sets~$C$ and integers~$\ell$, we now define a $(C,\ell)$-adapter below, which will be crucial in the proof of Lemma~\ref{lma:key}.
Roughly speaking a $(C, \ell)$-adapter is a vertex subset $W$ that contains a rainbow matching $M$ with $c(M) = C$ even after removing a vertex in~$W$.

Given $\ell \in \mathbb{N}$ and a set $C$ of colours, a vertex subset $W \subseteq V(G)$ is said to be a \emph{$(C,\ell)$-adapter} if there exist (not necessarily edge-disjoint) rainbow matchings $M_1, \ldots, M_{\ell}$ in $G[W]$ such that $c(M_i) = C$ for all $i \in [\ell]$, and  given any $w \in W$, there exists $i \in [\ell]$ such that $w \notin V(M_i)$. 
We write $C$-adapter for $( C, |C|+1)$-adapter. 
Note that a $(C,\ell)$-adapter is also a $(C,\ell')$-adapter for all $\ell \le \ell'$.
The following proposition studies some basic properties of $(C,\ell)$-adapters.

\begin{prp} \label{prp:plug}
Let $G$ be a graph with an edge-colouring~$c$. 
\begin{enumerate}
	\item[\rm (i)] Let $C= \{c_1, \dots, c_{\ell} \}$ be a set of distinct colours.
	Let $W = \{x_i,y_i, z_i, w : i \in [\ell] \}$ be a vertex set such that $c(x_iy_i) = c_i=  c(z_iw)$ for all $i \in [\ell]$.
	Then $W$ is a $C$-adapter.
	
	\item[\rm (ii)] Let $\ell_1, \ldots, \ell_p \in \mathbb{N}$ and let $C_1, \ldots, C_{p}$ be pairwise disjoint colour sets.
	Suppose that $W_j$ is a $(C_j, \ell_j)$-adapter for all $j \in [p]$ and that $W_1, \ldots, W_p$ are pairwise disjoint.
	Then $\bigcup_{j=1}^{p} W_j$ is a $(\bigcup_{j=1}^{p} C_j, \max_{ j \in [p] } \{\ell_j\} )$-adapter.

	\item[\rm (iii)] Let $C$ be a colour set.
	Suppose that $W$ is a $(C, \ell)$-adapter.
	Suppose that $x,y,z \in V(G) \setminus W$ and $w \in W$ such that $xy, zw \in E(G)$ and $c(xy) = c(zw) \notin C$.
	Then $W \cup \{x,y,z\}$ is a $( C \cup \{c(xy) \}, \ell+1 )$-adapter.
\end{enumerate}
\end{prp}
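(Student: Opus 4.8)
The plan is to handle all three parts by the same recipe: exhibit an explicit family of rainbow matchings with the required colour set inside the relevant induced subgraph, and then verify the two defining properties of a $(C,\ell)$-adapter — that each matching is rainbow with that colour set, and that every vertex of the ground set lies outside at least one of the matchings. None of the three parts should be hard; the only thing to get right is the bookkeeping of colours and of which vertex each matching omits.

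For (i), I would take $M_0=\{x_iy_i:i\in[\ell]\}$ together with, for each $j\in[\ell]$, the matching $M_j=\{x_iy_i:i\in[\ell]\setminus\{j\}\}\cup\{z_jw\}$. As the $3\ell+1$ listed vertices are distinct, each of $M_0,\dots,M_\ell$ is a matching in $G[W]$, and since $c(z_jw)=c_j=c(x_jy_j)$ and the $c_i$ are pairwise distinct, each of them is rainbow with colour set $C$. Then $M_0$ omits every $z_i$ and $w$, while $M_j$ omits $x_j$ and $y_j$, so every vertex of $W$ is omitted by one of these $\ell+1$ matchings, giving a $C$-adapter. The one point to keep in mind is that $z_1w,\dots,z_\ell w$ all meet at $w$, so only one of them can be substituted into $M_0$ at a time — hence the need for $\ell$ separate matchings $M_j$ rather than one.

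For (ii), put $\ell=\max_{j\in[p]}\ell_j$; by the remark that a $(C_j,\ell_j)$-adapter is also a $(C_j,\ell)$-adapter I may assume all $\ell_j=\ell$, with witnessing matchings $M_1^j,\dots,M_\ell^j$ in $G[W_j]$. The natural candidate is $N_i=\bigcup_{j=1}^pM_i^j$ for $i\in[\ell]$: disjointness of the $W_j$ makes $N_i$ a matching, and disjointness of the $C_j$ makes it rainbow with colour set $\bigcup_{j=1}^pC_j$. The covering property transfers coordinatewise — a vertex of $W_j$ omitted by some $M_i^j$ is omitted by the corresponding $N_i$, since it lies in no $M_i^{j'}$ for $j'\ne j$. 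For (iii), with $c_0=c(xy)=c(zw)\notin C$ and witnessing matchings $M_1,\dots,M_\ell$ for the $(C,\ell)$-adapter $W$, I would take $N_i=M_i\cup\{xy\}$ for $i\in[\ell]$ and, fixing some $i_0$ with $w\notin V(M_{i_0})$, also $N_{\ell+1}=M_{i_0}\cup\{zw\}$. These are matchings in $G[W\cup\{x,y,z\}]$ (using that $x,y,z$ are distinct vertices outside $W$ and $w\notin V(M_{i_0})$), each rainbow with colour set $C\cup\{c_0\}$ since $c_0\notin C$; and one checks that $N_{\ell+1}$ omits $x$ and $y$, every $N_i$ with $i\le\ell$ omits $z$, and a vertex of $W$ omitted by $M_i$ is omitted by $N_i$.

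If there is any obstacle at all, it is only in not losing track of the covering condition: one must remember to include $M_0$ in (i) to catch the $z_i$ and $w$, and to pick the matching $M_{i_0}$ avoiding $w$ before appending $zw$ in (iii). Everything else follows directly from the definition of a $(C,\ell)$-adapter and the disjointness hypotheses.
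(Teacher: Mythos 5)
Your proof is correct and follows essentially the same construction as the paper in all three parts: the explicit list of $\ell+1$ matchings in (i), the coordinatewise union in (ii), and adjoining $xy$ to each $M_i$ together with one extra matching using $zw$ in (iii). One small point worth noting: the paper's text for (iii) sets $M'_{\ell+1} = M'_1 \cup \{wz\}$, which (since $M'_1 = M_1 \cup \{xy\}$) would contain both $xy$ and $wz$ of the same colour and hence fail to be rainbow; your version $N_{\ell+1} = M_{i_0} \cup \{zw\}$ is the evidently intended and correct one, so you have in fact corrected a typo in the original.
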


\begin{proof}
To prove (i), we simply set $M_i = \{x_jy_j :  j \in [\ell] \setminus \{i\} \} \cup \{wz_i\}$ for all $i \in [\ell]$ and $M_{\ell +1} = \{x_jy_j :  j \in [\ell]\}$.

(ii) Let $\ell = \max \{\ell_j : j \in [p] \}$.
Note that each $W_j$ is a $(C_j, \ell)$-adapter.
For $j \in [p]$, let $M_1^j, \ldots, M_{\ell}^j$ be rainbow matchings in $G[W_j]$ such that $c(M_i^j) = C_j$ for all $i \in [\ell]$, and given any $w \in W_j$, there exists $i \in [\ell]$ such that $w \notin V(M_i^j)$. 
Set $M_i = \bigcup_{j=1}^{p} M^j_i$.
So (ii) holds.

(iii)
Let $M_1,\dots, M_{\ell}$ be rainbow matchings in~$G[W]$ such that $c(M_i) = C$ for all $i \in [\ell]$, and given any $w' \in W$, there exists $i \in [\ell]$ such that $w' \notin V(M_i)$. 
Without loss of generality we have $w \notin V(M_1)$.
Now set $M'_i = M_i \cup \{xy\} $ for all $i \in [\ell]$ and $M'_{\ell+1} = M'_1 \cup \{wz\}$.
Hence, $W \cup \{x,y,z\}$ is a $( C \cup \{c(xy) \}, \ell+1 )$-adapter.
\end{proof}

We prove the following lemma.
The main idea of the proof is to consider $(C,\ell)$-adapters in~$G$ with $\ell$ maximal.

\begin{lma} \label{lma:key}
Let $k \in \mathbb{N}$ and let $2< \gamma \le 3$.
Let $\mathcal{G}$ be a family of graphs closed under vertex/edge deletion with $\gamma (\mathcal{G}) \le \gamma$.
Suppose that $G \in \mathcal{G}$ with 
\begin{align*}
|G| \ge \left( 2+ \frac{\gamma}2 \right)k  +\frac{2(4-\gamma)}{(\gamma - 2)^2}-3 + \gamma
\end{align*}
and that $G$ contains a rainbow matching of size $k-1$.
Further suppose that for all rainbow matchings $M$ of size $k-1$ in $G$, we have $d^c(v) \ge k$ for all $v \in V(G) \setminus V(M)$.
Then $G$ contains a rainbow matching of size~$k$.
\end{lma}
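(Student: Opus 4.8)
The plan is to run a greedy/extremal argument on $(C,\ell)$-adapters, mirroring the structure of the preceding lemma but now exploiting adapters to gain the extra slack that brings the constant down to essentially $2+\gamma/2$. Fix a rainbow matching $M = \{x_iy_i : i \in [k-1]\}$ of size $k-1$ with $c(x_iy_i)=i$, and set $W = V(G)\setminus V(M)$; as before we may assume $G[W]$ is empty, since otherwise we immediately get a disjoint edge and are done via $\gamma(\mathcal{G})\le\gamma$ applied to a slightly smaller configuration. Among all sub-configurations of $M$, I would look at a maximal (in $\ell$) collection of pairwise disjoint adapters: more precisely, consider a partition of a subset $I\subseteq[k-1]$ together with vertices of $W$ forming a $(C,\ell)$-adapter $A$ with $C = I$ (a set of ``used'' colours) and $\ell$ as large as possible, using Proposition~\ref{prp:plug}(i) as the base case — a colour $i$ together with a $W$-vertex $z$ such that $z y_i$ (or $z x_i$) is an edge of colour $i$ gives a $\{i\}$-adapter — and Proposition~\ref{prp:plug}(ii),(iii) to grow it. The point of an adapter is that the $k-1-|C|$ matching edges $x_jy_j$ with $j\notin C$, together with the adapter $A$ (which realises colour set $C$ avoiding any one of its own vertices), form a rainbow matching of size $k-1$ that can dodge a prescribed vertex of $A$; so if we can attach one more disjoint edge touching $A$ we win.

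The core of the argument is the following dichotomy. Let $A$ be a maximal adapter with colour set $C$, $|C| = a$, and let $A' = \{x_j,y_j : j \notin C\}\cup A$ be the vertices ``spanned'' by the associated rainbow matchings. Consider $W' = W \setminus A$. If some vertex $w\in W'$ has a neighbour $u$ with $c(wu)\notin C$ and $u \notin A'$ — i.e.\ $u\in\{x_j,y_j\}$ for some $j\notin C$ — then either $c(wu)\notin[k-1]$, or $c(wu)=j'$ for some $j'\notin C$; in the first case we get a rainbow matching of size $k-1$ (swap in $wu$) plus a disjoint edge, in the second case we can use the adapter to produce the rainbow matching $\{x_jy_j : j\notin C, j\ne j'\}\cup\{wu\}\cup(\text{adapter matching realising }C)$ of size $k-1$, again leaving a disjoint edge or contradiction. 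So we may assume every edge from $W'$ into $V(G)\setminus A$ either has colour in $C$ or is incident to $A'\setminus A$ in a way already handled — meaning the only ``fresh-colour'' edges from $W'$ go into $A$ itself. But if a fresh-colour ($\notin C$) edge $wu$ goes from $w\in W'$ into $u\in A$, Proposition~\ref{prp:plug}(iii) (applied with the edge $x_jy_j$ of the matching for the appropriate new colour, or directly) lets us enlarge the adapter, contradicting maximality of $\ell$ — here I need to be a little careful and package the enlargement so that it genuinely increases the adapter parameter; this is where Proposition~\ref{prp:plug}(i) with its ``two edges of the same colour'' hypothesis comes in, combined with counting how many such edges a $W'$-vertex must send. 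Finally, the vertices of $W'$ then see, in fresh colours, only vertices of $\{y_j : j\notin C\}$ (after the usual relabelling of $x_j\leftrightarrow y_j$), so each $w\in W'$ has $d^c(w)\le a + (k-1-a) = k-1$ once we check there are no fresh-colour edges inside $W'$ (true since $G[W]$ is empty) and none to $\{x_j\}$ — contradicting $d^c(w)\ge k$, provided $W'\ne\emptyset$.

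The quantitative heart is therefore the bookkeeping that forces $W'\ne\emptyset$, i.e.\ that the maximal adapter is not too large. An adapter built from Proposition~\ref{prp:plug}(i),(iii) out of $a$ colours uses at most roughly $3a$ vertices of $W$ (three $W$-vertices per colour in the worst case, or fewer when parts merge), and $|A'| \le 2(k-1-a) + |A|$; the vertices outside $A'$ number at least $|G| - 2(k-1) - |A\setminus(\text{the }W\text{-part already counted})|$. Running the arithmetic with $|G| \ge (2+\gamma/2)k + 2(4-\gamma)/(\gamma-2)^2 - 3 + \gamma$ should leave $|W'|\ge 1$ exactly when $a$ is at most about $(\gamma-2)^{-1}$-ish — and the term $2(4-\gamma)/(\gamma-2)^2$ is precisely what absorbs the quadratic-in-$a$ cost of large adapters, after which one invokes $\gamma(\mathcal{G})\le\gamma$ (via Corollary~\ref{cor:alpha}) on the residual graph of $\ge \gamma k'$ vertices for the appropriate reduced $k' = k - a$. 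I expect the main obstacle to be exactly this: choosing the adapter-growth rule so that ``maximal $\ell$'' simultaneously (a) blocks every fresh-colour edge from $W'$ and (b) keeps $|A|$ bounded by a clean linear-in-$a$ quantity, so that the stated bound on $|G|$ is what the counting actually yields rather than something weaker. The reduction to Corollary~\ref{cor:alpha} for the final disjoint edge, and the relabelling $x_j\leftrightarrow y_j$, are routine by comparison.
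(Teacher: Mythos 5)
Your proposal correctly identifies adapters and maximality as the right tools, but it misses the two ideas that actually close the argument, and I do not think the plan as stated can be completed. The most serious omission is that the paper proves Lemma~\ref{lma:key} by \emph{induction on $k$}, and the induction step is precisely where the bound on the largest adapter comes from --- the ``main obstacle'' you flag at the end. The paper does not fix a single reference matching $M$ and grow one adapter; instead it tracks a partition $\mathcal{P} = \{W_1,\ldots,W_p,U\}$ of $V(G)$ in which each $W_i$ is a $C_i$-adapter of size exactly $3\ell_i+1$ (with $|C_i|=\ell_i$, the $C_i$ pairwise disjoint), and $U$ carries a rainbow matching $M_U$ of size $\ell_0=k-1-\sum\ell_i$ avoiding $\bigcup C_i$; the partition is chosen so that the string $(\ell_1,\ldots,\ell_p)$ is lexicographically maximal, which is a more flexible notion than ``maximal $\ell$'' for a single adapter and allows both enlarging one $W_i$ and creating a new $W_{p+1}$ to contradict maximality. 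If $(\gamma-2)\ell_1\ge 2$, then $|W_1|=3\ell_1+1\le(2+\gamma/2)\ell_1$, so deleting $W_1$ and the colours $C_1$ leaves a graph $H_1$ large enough for the induction hypothesis with parameter $k-\ell_1$; the resulting rainbow matching of size $k-\ell_1$ in $H_1$ unions with a rainbow matching of size $\ell_1$ and colour set $C_1$ inside $W_1$ to give the matching of size $k$. Your alternative --- ``invoke $\gamma(\mathcal{G})\le\gamma$ on the residual graph'' --- only yields a matching of size $k'-1$ plus a disjoint edge whose colour may repeat, which is strictly weaker than what the induction delivers; upgrading it requires exactly the machinery you are already relying on elsewhere, so the argument circles without terminating.

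The second gap concerns the quadratic term. You guess it comes from the vertex cost $\sim 3a$ of a large adapter, but it actually comes from an averaging argument. Since $W=\bigcup W_i$ is a $(C,\ell_1+1)$-adapter with witness matchings $M_1^*,\ldots,M_{\ell_1+1}^*$, any $z\in U\setminus V(M_U)$ sends at least $k-2\ell_0$ distinctly coloured edges into $V(G)\setminus V(M_U)$; by averaging, some $M_i^*$ is missed by $q\ge(k-2\ell_0)/(\ell_1+1)$ of these edges, and the switching claim (Claim~\ref{clm:switch}) forces each such colour to lie in $C$. This builds a new $(C',q)$-adapter out of $z$ and $q$ edges of $M_i^*$, so maximality gives $\ell_1\ge q$, i.e.\ $\ell_0\ge(k-\ell_1(\ell_1+1))/2$. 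It is this inequality, together with $|U|\le\gamma(\ell_0+1)$ (from applying $\gamma(\mathcal{G})\le\gamma$ to $G[U]$ after deleting the colours of $C$) and $(\gamma-2)\ell_1<2$, that makes the final count on $|G|$ produce the contradiction. Your route --- showing $d^c(w)\le k-1$ for some $w\in W'$ --- mirrors the earlier unnamed lemma and cannot by itself deliver the sharper constant $2+\gamma/2$; moreover your opening reduction to ``$G[W]$ empty'' is not justified for this lemma, since a disjoint edge with a repeated colour does not yield a rainbow matching of size $k$.
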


\begin{proof}
We proceed by induction on~$k$.
It is trivial for $k=1$, so we may assume that $k \ge 2$.

Let $p \in \mathbb{N} \cup \{0\}$ and let $\ell_1, \ldots, \ell_p \in \mathbb{N}$ with $\ell_1 \ge \ldots \ge \ell_p$ and $\sum_{i=1}^{p} \ell_i \le k-1$.
Let $\mathcal{P} = \{W_1, \ldots, W_p ,U\}$ be a vertex partition of $V(G)$.
We say that $\mathcal{P}$ has parameters $(\ell_1, \ell_2, \ldots, \ell_p)$ if 
\begin{enumerate}
	\item[\rm (a)] there exist $p$ pairwise disjoint colour sets $C_1, \ldots, C_p$ such that  $|C_i| = \ell_i$ for all $ i \in [p]$;
	\item[\rm (b)] $W_i$ is a $C_{i}$-adapter and $|W_i| = 3 \ell_i + 1$ for all $i \in [p]$;
	\item[\rm (c)] there exists a rainbow matching $M_U$ of size $k-1 - \sum_{i=1}^{p} \ell_i$ in $G[U]$ with $c(M_U) \cap C_i = \emptyset$ for all $i \in [p]$;
 	\item[\rm (d)] $U  \setminus V(M_U) \ne \emptyset$.
\end{enumerate}
Since $G$ contains a rainbow matching $M$ of size $k-1$, such a vertex partition exists ($p=0$ and $U = V(G)$ say).
We now assume that $\mathcal{P}$ is chosen such that the string $(\ell_1, \ldots, \ell_p)$ is lexicographically maximal. 
(Here, we view $(a_1,a_2,\dots,a_p)$ as $(a_1,a_2,\dots,a_p,0, \dots, 0)$, e.g. $(3,2,2) \le (4,1) \le (4,1,1)$.)

Let $C_1, \dots, C_p$ be the sets of colours guaranteed by (a)--(c).
Set $W = W_1 \cup \ldots \cup W_p$ and  $C= \bigcup_{i = 1}^{p} C_i$.
Let $\ell_0 = k-1 - \sum_{i=1}^p \ell_i$.
By (b) and Proposition~\ref{prp:plug}(ii), $W$ is a $(C, \ell_1+1)$-adapter.
The following claim gives some useful properties of the rainbow matchings in $G[U]$ and~$G \setminus W$.
This will be needed to finish the proof of the lemma.

\begin{clm} \label{clm:switch}
\begin{itemize}
\item[\rm (i)] Let $M_U$ be a rainbow matching of size $\ell_0$ in $G[U]$ with $c(M_U) \cap C = \emptyset$.
If $|U| \ge 2 \ell_0 + 2$ and there is an edge $wz \in E(G)$ with $w \in W$ and $z \in U \setminus V(M_U)$, then we have $c(wz) \in C$. 

\item[\rm (ii)] Let $M'$ be a rainbow matching of size $k-1- \ell_1$ in $G \setminus W$ with $c(M') \cap C_1 = \emptyset$.
If $wx \in E(G)$ with $w \in W_1$ and $x \in V(G) \setminus (W_1 \cup V(M'))$, then $c(wx) \in C_1$. 
\end{itemize}
\end{clm}

\begin{proof}[Proof of Claim]
Suppose that (i) is false. 
There exists an edge $wz \in E(G)$ such that $c(wz) \notin C$, $w \in W_i$ for some $i \in [p]$ and $z \in U \setminus V(M_U)$.
Note that there exists a rainbow matching $M_W$ in $G[W \setminus w]$ such that $c(M_W) = C$ since $W$ is a $C$-adapter.
If $c(wz) \notin C \cup c(M_U)$, then $M_U \cup M_W \cup \{wz\}$ is a rainbow matching of size~$k$, so we are done.
If $c(wz) \in c(M_U)$, then let $xy$ be the edge in $M_U$ such that $c(xy) = c(wz)$.
Set $W'_i = W_i \cup\{x,y,z \}$, $W'_j = W_j$ for all $j \in [p] \setminus \{i\}$ and $U' = U \setminus \{x,y,z\}$.
Let $\ell_i' = \ell_i +1$ and let $\ell_j' = \ell_j$ for all $j \in [p] \setminus \{i\}$.
Set $C'_i = C_i \cup\{ c(xy) \}$ and $C'_j = C_j$ for all $j \in [p] \setminus \{i\}$.
By Proposition~\ref{prp:plug}(iii), $W'_j$ is a $C'_j$-adapter for all $j \in [p]$.
Note that $M_{U'} = M_U - xy$ is a rainbow matching in $G[U']$ with $c(M_{U'}) \cap C'_j = \emptyset$ for all $j \in [p]$.
Also $U' \setminus V( M_{U'} )  = U \setminus ( V(M_U) \cup \{z\}) \ne \emptyset$.
By relabelling the sets $W'_j$ and $C'_j$ if necessary, we deduce that the vertex partition $\mathcal{P}' = \{W'_1, \dots, W'_p, U'\}$ has parameters $(\ell'_1, \dots, \ell'_p) > (\ell_1, \dots, \ell_p)$, which contradicts the maximality of $\mathcal{P}$.
Hence (i) holds.

A similar argument proves~(ii).
\end{proof}

Suppose that $|U| > \gamma (\ell_0+1)$, so $|U| \ge 2 \ell_0 +3$.
Let $H$ be the resulting subgraph of $G [U]$ obtained after removing all edges of colours in~$C$. 
Let $M_U$ be a rainbow matching in $H$ of size~$\ell_0$ with $c(M_U) \cap C = \emptyset$, which exists by~(c).
By Claim~\ref{clm:switch}(i), we have for all $z \in V(H)\setminus V(M_U)$, $d_H^c(z) \ge k - |C| = \ell_0 +1$.
Since $\gamma(\mathcal{G}) \le \gamma$, $H$ contains a rainbow matching $M_0$ of size $\ell_0$ and a disjoint edge~$e$.
If $c(e) = c(xy)$ for some $xy \in M_0$, then set $W_{p+1} = V(e) \cup \{x,y\}$, $C_{p+1} = \{c(xy)\}$, and $U' = U \setminus ( V(e) \cup \{x,y\} ) $.
Observe that $W_{p+1}$ is a $C_{p+1}$-adapter by Proposition~\ref{prp:plug}(i).
Note that $M_0 - xy$ is a rainbow matching of size $\ell_0 -1$ in $G[U']$ with $c(M_0) \cap \bigcup_{j \in [p+1]}C_j = \emptyset$ and $|U' \setminus V(M_0) | = |U| - 2\ell_0 - 2 \ge 1$. 
Hence the vertex partition $\mathcal{P}'=\{ W_1, \dots, W_{p+1}, U'\}$ has parameters $(\ell_1, \dots, \ell_p,1)$, contradicting the maximality of~$\mathcal{P}$.
If $c(e) \notin c(M_0)$, then $M_0 \cup e$ is a rainbow matching with $c(M_0 \cup e) \cap C = \emptyset$.
Together with~(b), $G$ contains a rainbow matching of size~$k$ with colours $c(M_0 \cup e) \cup C$, so we are done.
Therefore we may assume that 
\begin{align}
|U| \le \gamma (\ell_0+1). \label{eqn:Z}
\end{align}
Since $2 <\gamma \le 3$ and $\ell_0 \le k-1$, by the assumptions of Lemma~\ref{lma:key}, we have $|G| > (2+\gamma/2)k > \gamma k \ge |U|$.
Therefore, $W \ne \emptyset$ and $\ell_1 \ge 1$.

Next, suppose that $( \gamma -2) \ell_1 \ge 2$, so $|W_1| = 3 \ell_1 +1 \le (2+ \gamma/2) \ell_1$.
Let $H_1$ be the subgraph of $G$ obtained by removing all vertices of $W_1$ and all edges of colours in $C_1$.
By the assumptions of Lemma~\ref{lma:key}, we then have
\begin{align*}
|H_1| = |G| - |W_1|  \ge \left( 2+ \frac{\gamma}2 \right) (k - \ell_1)  +\frac{2(4-\gamma)}{(\gamma - 2)^2}-3 + \gamma.
\end{align*}
By (b) and~(c), $H_1$ contains a rainbow matching~$M'$ of size $k-1- \ell_1$.
By Claim~\ref{clm:switch}(ii), $c(wx) \in C_1$ for all $w \in W_1$ and $x \in V(H_1) \setminus V(M')$.
Hence, $d_{H_1}^c(z) \ge k - |C_1| = k - \ell_1$ for all $z \in V(H_1) \setminus V(M')$.
Note that this statement also holds for any rainbow matchings $M'$ of size $k-1-\ell_1$ in $H_1$.
Hence $H_1$ satisfies the hypothesis of the lemma with $k = k- \ell_1$. 
By the induction hypothesis, $H_1$ contains a rainbow matching~$M''$ of size~$k - \ell_1$.
By~(b), there exists a rainbow matching~$M_1$ of size~$\ell_1$ in $G[W_1]$ such that $c(M_1) = C_1$.
Since $c(M_1) \cap c(M'') \subseteq C_1 \cap c(H_1) = \emptyset$, $M_1 \cup M''$ is a rainbow matching of size~$k$ as required.
Therefore we may assume that 
\begin{align}
( \gamma -2) \ell_1 < 2. \label{eqn:alpha}
\end{align}

Recall that $W$ is a $(C, \ell_1+1)$-adapter. 
So there exist rainbow matchings $M_1^*$, $M_2^*$, \dots, $M_{\ell_1+1}^*$ such that $c(M_i^*) = C$ for all $i \in [\ell_1+1]$ and 
\begin{align}
W = \bigcup_{i = 1}^{\ell_1 +1} (W \setminus V(M_i^*)). \label{eqn:W}
\end{align}
Let $M_U$ be a rainbow matching of size~$\ell_0$ in $G[U]$ with $c(M_U) \cap C = \emptyset$ (which exists by (c)).
By~(d), there exists $z \in U \setminus V(M_U)$. 
Note that $z$ sends at least $d^c(z) - |V(M_U)| \ge k - 2\ell_0 $ edges of distinct colours to $V(G) \setminus V(M_U)$.
Let $q = \lceil (k- 2 \ell_0) / (\ell_1 +1) \rceil$.
By~\eqref{eqn:W} and an averaging argument, there exists $i \in [\ell_1 +1]$ such that there exist vertices $x_1, \dots, x_q \in V(G) \setminus V( M_U \cup M_i^* )$ such that $c(z x_j)$ is distinct for each $j \in [q]$.
By Claim~\ref{clm:switch}(i), we have  $c( z x_j) \in C = c(M_i^*)$ for all $j \in [q]$.
Let $e_1, \ldots, e_q$ be edges of $M_i^*$ such that $c(e_j) = c(zx_j)$ for all $j \in [q]$.
Set $W' = \bigcup_{j \in [q]} ( V(e_j) \cup \{x_j,z\})$ and $ C' = \{ c(e_j) : j \in[q]\}$.
By Proposition~\ref{prp:plug}(i), $W'$ is a $C'$-adapter.
Set $U' = V(G) \setminus W'$ and $M_{U'} = ( M_i^* \cup M_U ) \setminus W'$.
Note that $V( M_{U'} ) \subseteq U'$ and $M_{U'}$ is a rainbow matching of size $k-1 - q$ with $c(M_{U'}) \cap C' = \emptyset$.
Therefore, the vertex partition $\mathcal{P}' = \{W',U'\}$ has parameter~$(q)$.
By the maximality of $\mathcal{P}$, we have $\ell_1 \ge q \ge (k- 2 \ell_0) / (\ell_1 +1) $ and so
\begin{align}
\ell_0 \ge  (k - \ell_1(\ell_1 +1))/2. \label{eqn:1}
\end{align}
Recall that $|W_i| = 3 \ell_i +1 \le 4 \ell_i$ for all $i \in [p]$, that $\sum_{i=1}^p \ell_i + \ell_0 = k-1$, and that $2 < \gamma \le 3$.
Finally, we have 
\begin{align*}
	|G| &= |W_1| + \sum_{i = 2}^{p}|W_i| + |U|
	 \overset{\eqref{eqn:Z}}{\le} 3\ell_1 +1 + 4 \sum_{i = 2}^{p} \ell_i + \gamma ( \ell_0 +1) \\
	& = 3\ell_1 +1 + 4(k-1- \ell_1)  -(4-\gamma) \ell_0 + \gamma\\
	& \overset{\eqref{eqn:1}}{\le}  4k-3- \ell_1  - \frac{(4-\gamma) (k - \ell_1(\ell_1 +1))}{2} + \gamma \\
 	& = \left( 2+ \frac{\gamma}2 \right)k -3 - \ell_1 +\frac{(4-\gamma)\ell_1(\ell_1 +1)}{2} + \gamma\\
 	& < \left( 2+ \frac{\gamma}2 \right)k  +\frac{(4-\gamma)\ell_1^2}{2}-3 + \gamma 
	 \overset{\eqref{eqn:alpha}}{<}  \left( 2+ \frac{\gamma}2 \right)k  +\frac{2(4-\gamma)}{(\gamma - 2)^2}-3 + \gamma,
\end{align*}
a contradiction. 
This completes the proof of the lemma.
\end{proof}

We are now ready to prove Theorems~\ref{rainbowmatching} and~\ref{rainbowmatching2}.

\begin{proof}[Proof of Theorems~\ref{rainbowmatching} and~\ref{rainbowmatching2}]
We first prove Theorem~\ref{rainbowmatching} by induction on~$k$.
Let $G$ be an edge-coloured graph on $n \ge 7k/2+2$ vertices with $\delta^c(G) \ge k$.
This is trivial for $k =1$ and so we may assume that $k \ge 2$.
By the induction hypothesis $G$ contains a rainbow matching of size $k-1$.
Since $\delta^c(G) \ge k$, Corollary~\ref{cor:alpha} implies that $G$ satisfies the hypothesis of Lemma~\ref{lma:key} with $\gamma = 3$.
Therefore, $G$ contains a rainbow matching of size $k$ as required.

To prove Theorem~\ref{rainbowmatching2}, first note that by Proposition~\ref{prp:alpha}, $\gamma(\mathcal{G}')= 2$, where $\mathcal{G}'$ is the family of all bipartite graphs. 
Also, for $\gamma = 2 + 2 \varepsilon$, we have 
\begin{align*}
\left( 2+ \frac{\gamma}2 \right)k  +\frac{2(4-\gamma)}{(\gamma - 2)^2}-3 + \gamma
 & = (3+ \varepsilon)k + \frac{2(2- 2\varepsilon)}{4\varepsilon^{2}} - 1 + 2\varepsilon \le (3+ \varepsilon)k + \varepsilon^{-2}.
\end{align*}
Therefore, Theorem~\ref{rainbowmatching2} follows from a similar argument used in the preceding paragraph, where we take $\gamma = 2 + 2 \varepsilon$ and $\mathcal{G}$ to be the family of all bipartite graphs in the application of Lemma~\ref{lma:key}.
\end{proof}

We would like to point out that an improvement of Corollary~\ref{cor:alpha} would lead to an improvement of Theorem~\ref{rainbowmatching}.
However, we believe that new ideas are needed to prove the case when $2k < |G| < 3 k$.

\section{Existence of rainbow matching covers} \label{sec:decom}

\begin{proof}[Proof of Theorem~\ref{thm:rainmatchigndecom}]
By colouring every missing edge in $G$ with a new colour, we may assume that $G$ is an edge-coloured complete graph on $n$ vertices with $ \Dmon (G) = t$ and colours $\{1, 2, \dots, p \}$.
For $i \le p$, let $G^i$ be the subgraph of $G$ induced by the edges of colour~$i$.
Without loss of generality, we may assume that $e(G^1) \ge e(G^2) \ge \dots \ge e(G^p)$.

For $1 \le i \le p$, suppose that we have already found a set $\mathcal{M} = \{M_1, \dots, M_{ \lfloor tn/2 \rfloor}\}$ of edge-disjoint (possiblely empty) rainbow matchings such that $\bigcup_{1 \le j \le \lfloor tn/2 \rfloor} M_j = \bigcup_{j' < i} E(G^{j'})$.
We now assign edges of $G^i$ to these matchings so that the resulting rainbow matchings $M'_1, \dots, M'_{\lfloor tn/2 \rfloor}$ contain all edges of $G^{1} \cup \dots \cup G^i$.
Define an auxiliary bipartite graph~$H$ as follows.
The vertex classes of $H$ are $E(G^i)$ and $\mathcal{M}$.
An edge $f \in E(G^i)$ is joined to a rainbow matching $M_j \in \mathcal{M}$ if and only if $f$ is vertex-disjoint from $M_j$.
If $H$ contains a matching of size $e(G^i)$, then we assign $f \in E(G^i)$ to $M_j \in \mathcal{M}$ according to the matching in~$H$.
Thus we have obtained the desired rainbow matchings $M'_1, \dots, M'_{\lfloor tn/2 \rfloor}$.
Therefore, to prove the theorem, it is sufficient to show that $H$ satisfies Hall's conditions.

Let $f \in E(G^i)$. 
Since $f$ is incident to $2(n-2)$ edges in $G$, $f$ is incident to at most $2(n-2)$ matchings $ M_j \in \mathcal{M}$.
Thus, 
\begin{align}
|N_H(f)| \ge |\mathcal{M}| - 2(n-2)\ge (t-4)n/2. \label{eqn:f}
\end{align}
We divide the proof into two cases depending on the value of~$i$.

\medskip
\noindent
\textbf{Case 1: $i \le \frac{(t-4)n}{4 ( t+1 ) }$.}
Let $S \subseteq E(G^i)$ with $S \ne \emptyset$.
Note that each $M_j \in \mathcal{M}$ has size at most $i-1$. 
If $S$ contains a matching of size $2i-1$, then for every $M_j \in \mathcal{M}$, there exists an edge $f \in S$ vertex-disjoint from $M_j$.
Thus, $N_H(S) = \mathcal{M}$ and so $|N_H(S)| = \lfloor tn/2 \rfloor \ge e(G^i) \ge |S|$.

Therefore, we may assume that $S$ does not contain a matching of size $2i -1$.
By Vizing's theorem, $|S| \le 2(i-1)(\Delta(G^i)+1) \le 2(i-1)(t+1)$.
By~\eqref{eqn:f} and the assumption on~$i$, we have
\begin{align*}
	|N_H(S)|
& \ge (t-4)n/2 \ge 2(i-1)(t+1) \ge |S|.
\end{align*}
Therefore, Hall's condition holds for this case.

\noindent
\textbf{Case 2: $i >\frac{(t-4)n}{4 ( t+1 ) }$.}
Since $e(G^1) \ge e(G^2) \ge \dots \ge e(G^p)$, we have $e(G^i) \le \binom{n}2/i < 2 (t+1) n / (t-4)$.
Let $S \subseteq E(G^i)$ with $S \ne \emptyset$.
By~\eqref{eqn:f} and the fact that $t \ge 11$, we have 
\begin{align*}
	|N_H(S)|
& \ge (t-4)n/2 
 \ge 2 (t+1) n / (t-4) > e(G^i) \ge |S|.
\end{align*}
Therefore, Hall's condition also holds for this case.
This completes the proof of the theorem.
\end{proof}

\section*{Acknowledgements}

The author would like to thank the referees for their helpful suggestions.


\begin{thebibliography}{00}


\bibitem{diemunsch2} J. Diemunsch, M. Ferrara, A. Lo, C. Moffatt, F. Pfender, and P. S. Wenger, \emph{Rainbow matching of size $\delta(G)$ in properly-colored graphs}, Electronic Journal of Combinatorics \textbf{19} (2012), \#P52.

\bibitem{gyarfas} A. Gy\'arf\'as and G. N. Sarkozy, \emph{Rainbow matchings and cycle-free partial transversals of Latin squares}, Discrete Mathematics \textbf{327} (2014), 96--102.

\bibitem{kano} M. Kano and X. Li, \emph{Monochromatic and heterochromatic subgraphs in edge-colored graphs -- a survey}, Graphs Combin. \textbf{24} (2008), 237--263.

\bibitem{kostochka1} A. Kostochka and M. Yancey, \emph{Large rainbow matchings in edge-coloured graphs}, Combinatorics, Probability and Computing \textbf{21} (2012), 255--263.

\bibitem{kostochka2} A. Kostochka , F. Pfender and M. Yancey, \emph{Large rainbow matchings in large graphs}, arXiv:1204.3193.

\bibitem{lesaul} T. D. LeSaulnier, C. Stocker, P. S. Wenger, and D. B. West, \emph{Rainbow matching in edge-colored graphs}, Electronic Journal of Combinatorics \textbf{17} (2010), \#N26.

\bibitem{LeSaulnierWest} T. D. LeSaulnier and D. B. West, \emph{Rainbow edge-coloring and rainbow domination}, Discrete Mathematics \textbf{313} (2013), 2020--2025.

\bibitem{wang2} H. Li and G. Wang, \emph{Heterochromatic matchings in edge-colored graphs}, Electronic Journal of Combinatorics \textbf{15} (2008), \#R138.

\bibitem{rainbow} A. Lo and T. S. Tan, \emph{A note on large rainbow matchings in edge-coloured graphs}, Graphs and Combinatorics  \textbf{30} (2014), 389--393.


\bibitem{ryser} H. J. Ryser, \emph{Neuere probleme der kombinatorik}, Vortrage \"uber Kombinatorik Oberwolfach, Mathematisches Forschungsinstitut Oberwolfach (1967), 24--29.

\bibitem{wang1} G. Wang, \emph{Rainbow matchings in properly edge colored graphs}, Electronic Journal of Combinatorics \textbf{18} (2011), \#P162.


\end{thebibliography}
\end{document}